\newtheorem{theorem}{Theorem}
\newtheorem{corollary}[theorem]{Corollary}
\newtheorem{lemma}[theorem]{Lemma}
\newtheorem{proposition}[theorem]{Proposition}
\newtheorem{conjecture}[theorem]{Conjecture}
\newtheorem{problem}[theorem]{Problem}
\title{Strong embeddings of minimum genus}
\author{Bojan Mohar\thanks{Supported in part by an NSERC Discovery Grant (Canada),
  by the Canada Research Chair program, and by the
  Research Grant P1--0297 of ARRS (Slovenia).}~\thanks{On leave from:
  IMFM \& FMF, Department of Mathematics, University of Ljubljana, Ljubljana,
  Slovenia.}\\
  {Department of Mathematics}\\
  {Simon Fraser University}\\
  {Burnaby, B.C. V5A 1S6} \\
  email: {\tt mohar@sfu.ca}
}
\newcommand\CC{\hbox{$\cal{C}$}}
\newcommand\g{\hbox{\bf g}}  
\newcommand\sg{\overline{\hbox{\bf {g}}}}  
\newcommand{\DEF}[1]{{\em #1\/}}
\newcommand{\eopf}{\framebox[3mm]}
\newenvironment{proof}%
{\noindent{\bf Proof.}\ }%
{\hfill\eopf\par\bigskip}%
\begin{document}

\date{}

\maketitle

\begin{abstract}
A ``folklore conjecture, probably due to Tutte'' (as described in 
[P.D. Seymour, Sums of circuits, Graph theory and related topics 
(Proc. Conf., Univ. Waterloo, 1977), pp.~341--355, Academic Press, 1979])
asserts that every bridgeless cubic graph can be embedded on a surface 
of its own genus in such a way that the face boundaries are cycles
of the graph. Sporadic counterexamples to this conjecture have been known since 
the late 1970's. In this paper we consider closed 2-cell embeddings of graphs
and show that certain (cubic) graphs (of any fixed genus) 
have closed 2-cell embedding only in surfaces whose genus is very large
(proportional to the order of these graphs), thus providing a plethora of 
strong counterexamples to the above conjecture. 
The main result yielding such counterexamples may be of independent interest.
\end{abstract}


\section{Introduction}

In his seminal work {\em Sums of circuits\/} \cite{Se}, Paul Seymour stated the following
conjecture, which he has addressed as a ``folklore conjecture, probably due to
Tutte.''

\begin{conjecture}
\label{conj:1}
Any bridgeless cubic graph can be embedded on a surface of its own genus 
in such a way that the perimeters of all regions are circuits.
\end{conjecture}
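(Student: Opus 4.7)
The plan is to start with any minimum genus embedding $\Pi$ of the bridgeless cubic graph $G$ and perform local surgeries that convert every face boundary into a simple cycle while preserving the genus $\g(G)$. I should note at the outset that the abstract announces counterexamples, so I expect this plan to fail; the interesting question is \emph{where} it fails.

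First, examine the face boundary walks of $\Pi$. Since $G$ is bridgeless, no boundary walk repeats an edge, so the only obstruction to its being a cycle is a \emph{pinch point}: a vertex $v$ that is visited twice by the boundary walk of some face $F$. Because $v$ is cubic the local picture is essentially rigid---two pairs of edges at $v$ lie on $\partial F$---and one hopes to resolve each pinch point by a local disc-exchange or edge-swap that splits $F$ into two faces without altering the rotation system elsewhere. A more global avenue is to invoke cycle double covers: a strong embedding is exactly a cycle double cover of $G$ whose cycles can be realized as face boundaries, so the existence half is covered by known results (for instance by Jaeger's theorem on $3$-edge-colourable or $4$-edge-connected cubic graphs), and every such \emph{faithful} cover induces an embedding via its rotation system.

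The main obstacle---and, I suspect, the precise location where the conjecture breaks---is genus control. Euler's formula gives no reason why local surgery at a pinch point should avoid adding a handle, and different faithful cycle double covers realize wildly different genera, none of which must equal $\g(G)$. I therefore expect the paper to exhibit bridgeless cubic graphs for which $\sg(G) - \g(G)$ is forced to grow linearly in $|V(G)|$, with the main auxiliary theorem being a structural lower bound on $\sg(G)$ in terms of combinatorial parameters of $G$ that $\g(G)$ itself cannot see.
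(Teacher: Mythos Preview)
You have read the situation correctly: the paper does not prove Conjecture~\ref{conj:1}; it refutes it. There is therefore no ``paper's proof'' to compare against, and your choice to locate the failure point rather than push a doomed argument is the appropriate response.

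Your forecast of the mechanism is accurate. The main auxiliary result (Theorem~\ref{thm:1}) bounds $\sg(G)$ from below by $\lfloor d'(x,y)/3\rfloor$, where $xy$ is a planarizing edge of a near-planar $G$ (so $\g(G)=1$), $G-xy$ is a subdivided $3$-connected graph, and $d'(x,y)$ is the facial distance between $x$ and $y$ in the planar drawing of $G-xy$: exactly a ``combinatorial parameter that $\g(G)$ itself cannot see.'' The argument finds, inside any strong embedding, many planar faces of $G-xy$ that cannot remain facial once $xy$ is added, shows they form an induced nonseparating family, and invokes Lemma~\ref{lem:homologically independent} to bound the genus below by their number. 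Corollary~\ref{cor:1} then exhibits cubic near-planar graphs $G_n$ of order $18n-6$ with $\sg(G_n)\ge n$, so $\sg-\g$ grows linearly in $|V|$, as you predicted.

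One small correction to your sketch: the implication ``$G$ bridgeless $\Rightarrow$ no facial walk repeats an edge'' is a planar fact and does not extend verbatim to arbitrary embeddings on surfaces of positive genus, where a non-bridge edge may well have the same face on both sides. Whether this can occur in a \emph{minimum}-genus embedding of a bridgeless graph is a subtler question, but in any case the local picture at a bad face is less rigid than ``only pinch vertices.'' This does not affect your overall diagnosis, since you already identify genus control---not local surgery---as the point where any such argument must collapse, and the paper confirms that reading.
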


As much as this conjecture was folklore in the nineteen seventies, 
today's methods of topological graph theory enable rather easy constructions of
counterexamples. There is evidence that several people were aware that 
Conjecture \ref{conj:1} is false for toroidal graphs. 
Apparently, an example of a toroidal cubic graph disproving Conjecture \ref{conj:1}
appears in the Ph.D. Thesis of Xuong \cite{Xu}. 
Richter \cite{Ri} found further examples of 2-connected (but not 3-connected)
cubic graphs of genus one for which the conjecture fails. 
Zha \cite{Zh3} constructed
graphs $G_g$, for each integer $g\ge1$, whose genus is $g$, but every embedding
in the orientable surface of genus $g$ has a face that is not bounded by a cycle
of the graph. (The examples of Zha are not cubic graphs, though.)
It is also stated by Zha in \cite{Zh3} that Archdeacon and Stahl, and Huneke,
Richter, and Younger (respectively) informed him of having constructed
further examples of toroidal cubic graphs disproving Conjecture \ref{conj:1}.
One purpose of this paper is to bring a rich new family of counterexamples
to the attention of interested graph theorists.

Our real goal is to provide simple examples, yet powerful enough to exhibit some
additional extremal properties. 
In particular, we shall consider (cubic) graphs of genus one.
If a cubic graph $G$ has an embedding in the torus with a face whose boundary is
not a cycle, then $G$ contains an edge $e$ whose removal yields a planar graph.
We call such a graph \DEF{near-planar} and refer to the edge $e$ as 
a \DEF{planarizing edge} (see \cite{CM1,CM2}). Our main result, 
Theorem \ref{thm:1}, gives a simple recipe for constructing near-planar
(cubic) graphs, all of whose embeddings of small genus have facial
walks that are not cycles. See Corollary \ref{cor:1} for more details.
Theorem \ref{thm:2} generalizes Theorem \ref{thm:1} to arbitrary surfaces.

An embedding of a graph in a surface is \DEF{strong} (sometimes also referred to
as a \DEF{closed 2-cell embedding} or a \DEF{circular embedding} \cite{RSS}) 
if each face boundary is a cycle in the graph.
We denote by $\g(G)$ the (orientable) \DEF{genus} of $G$. 
By $\sg(G)$ we denote the \DEF{strong genus} of $G$, which is defined as the 
smallest genus of an orientable surface in which $G$ has a strong embedding. 
If $G$ has no strong orientable embeddings, then $\sg(G)=\infty$.
In this notation, Conjecture \ref{conj:1} claims that $\sg(G)=\g(G)$ for every
bridgeless cubic graph $G$. 

The well-known \DEF{Cycle Double Cover Conjecture} claims that every 
2-edge-connected graph admits a collection of cycles such that every edge is
contained in precisely two of the cycles from the collection. Such a collection
is called a \DEF{cycle double cover} of the graph. If $G$ is cubic, then every
cycle double cover of $G$ determines a strong embedding of $G$ in some
surface (possibly non-orientable). There is also an orientable version of 
the cycle double cover conjecture:

\begin{conjecture}[Jaeger \cite{Ja85}]
\label{conj:oriCDC}
Every 2-connected graph $G$ has a strong embedding in some orientable surface,
i.e. $\sg(G)<\infty$.
\end{conjecture}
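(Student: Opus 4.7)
The displayed statement is Jaeger's orientable Cycle Double Cover Conjecture, a well-known open problem, so any plan I can sketch is necessarily speculative and follows the broad outline of standard attacks in the literature. The overall strategy is to reduce to a tightly structured class of cubic graphs and then to build a strong orientable embedding from a nowhere-zero flow.

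First, I would reduce to the cubic case: expand each vertex of degree $d\ge 4$ into a small $2$-connected cubic gadget, chosen so that a strong orientable embedding of the expanded graph descends, upon contracting the gadgets, to a strong orientable embedding of $G$. Second, I would reduce to cyclically $4$-edge-connected cubic graphs. A $2$-edge cut $\{e,f\}$ splits $G$ into two smaller $2$-connected pieces which, after identifying $e$ and $f$ into a single edge, admit strong orientable embeddings by induction; these two embeddings can then be glued across the cut. A similar but more delicate splitting handles nontrivial $3$-edge cuts.

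For a cyclically $4$-edge-connected cubic $G$ that is $3$-edge-colorable, the three perfect matchings yield three pairwise-complementary $2$-factors, whose pairwise unions form three even subgraphs constituting a cycle double cover; with a little care one orients this cover coherently to obtain a strong embedding in an orientable surface, with genus controlled by the number of components of the $2$-factors. More generally, an oriented cycle double cover would follow from an oriented nowhere-zero $4$-flow, and Jaeger's $4$-flow theorem provides a nowhere-zero $4$-flow on every $4$-edge-connected graph, which takes care of this sub-case.

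The hard part, as one would expect, is the class of snarks: cyclically $4$-edge-connected cubic graphs of girth at least $5$ with no $3$-edge-coloring. For these graphs the flow-based route collapses, and one is left either to appeal to Tutte's (also open) $5$-flow conjecture, which would imply Conjecture~\ref{conj:oriCDC}, or to carry out a case-by-case structural analysis. Although the Petersen graph and many sporadic snarks are known to carry oriented cycle double covers, no uniform construction is known, and it is precisely this obstacle that has kept the conjecture open for over four decades, so I would not expect a short proof to emerge from the above outline without a genuinely new idea at the snark step.
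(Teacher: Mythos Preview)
The statement you were asked to prove is labelled \emph{Conjecture} in the paper, and the paper does not attempt to prove it: it is Jaeger's orientable cycle double cover conjecture, cited as an open problem and used only as motivation. There is therefore no proof in the paper to compare your proposal against, and you have correctly identified the status of the statement.

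Your outline of the standard reduction strategy is broadly accurate, and you are right that the snark case is the genuine obstruction. One small slip: after reducing to cubic graphs you invoke Jaeger's $4$-flow theorem for $4$-edge-connected graphs, but a cubic graph has edge-connectivity at most~$3$, so that theorem is not directly applicable at that point. What you actually need (and what you essentially say just before) is that a cubic graph is $3$-edge-colorable if and only if it admits a nowhere-zero $4$-flow, and in that case the three bicoloured $2$-factors already yield an orientable cycle double cover. The remaining case is exactly the snarks, where, as you note, no uniform argument is known; your honest conclusion that a new idea is required is the correct assessment.
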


We follow standard graph theory terminology (see, e.g.~\cite{Di}). For the notions of
topological graph theory we refer to \cite{MT}.
All embeddings of graphs in surfaces are assumed to be \DEF{2-cell embeddings}.
If $S$ is a closed surface, whose Euler characteristic is $c=\chi(S)$,
then the \DEF{genus} of $S$ is equal to $\tfrac{1}{2}(2-c)$ if $S$ is
orientable, and is equal to $2-c$ if $S$ is non-orientable.

\section{Facial distance and nonseparating cycles}

Let $G$ be a graph embedded in a surface $S$ and let $x,y\in V(G)$, $x\ne y$.
We define the \DEF{facial distance} $d'(x,y)$ between $x$ and $y$ as the minimum 
integer $r$ such that there exist facial walks $F_1,\dots, F_r$ where $x\in V(F_1)$,
$y\in V(F_r)$, and $V(F_i)\cap V(F_{i+1})\ne\emptyset$ for every $i$, $1\le i<r$.
The following dual expression for $d'(x,y)$, see \cite{CM2}, can be viewed as 
a surface version of Menger's Theorem. 

\begin{proposition}
\label{prop:1}
Let $G$ be a plane graph and $x,y\in V(G)$, where $y$ lies on the boundary 
of the exterior face. Let $r$ be the maximum number of vertex-disjoint cycles,
$Q_1,\dots,Q_r$, contained in $G-x-y$, such that for $i=1,\dots,r$, 
$x\in int(Q_i)$ and $y\in ext(Q_i)$. Then $d'(x,y)=r+1$.
\end{proposition}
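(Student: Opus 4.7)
The plan is to establish both directions $d'(x,y)\ge r+1$ and $d'(x,y)\le r+1$ separately. The first is a direct geometric/combinatorial argument using the nesting of separating cycles; the second requires a BFS-layering construction in the radial graph.

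For $d'(x,y)\ge r+1$: take $r$ pairwise vertex-disjoint cycles $Q_1,\dots,Q_r$ in $G-x-y$, each with $x$ in its interior and $y$ in its exterior. Being disjoint Jordan curves in the plane all containing $x$ in their interior, they are nested; after relabelling we may assume $Q_1\subset\mathrm{int}(Q_2)\subset\cdots\subset\mathrm{int}(Q_r)$. They cut the plane into $r+1$ open regions $R_0,\dots,R_r$ with $x\in R_0$ and $y\in R_r$. The interior of every face of $G$ is connected and avoids $E(G)$, so it lies entirely in some $R_i$, and all vertices of that face lie in $\overline{R_i}$. Since $\overline{R_i}\cap\overline{R_j}=\emptyset$ whenever $|i-j|\ge 2$, two faces in such regions share no vertex. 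Thus any chain of facial walks from a face containing $x$ (which must lie in $R_0$) to a face containing $y$ (which must lie in $R_r$) must visit the regions in order, so has length at least $r+1$.

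For $d'(x,y)\le r+1$: set $s=d'(x,y)-1$ and exhibit $s$ pairwise vertex-disjoint cycles in $G-x-y$ separating $x$ from $y$, which yields $r\ge s$. I would pass to the radial graph $R(G)$, the bipartite plane graph on $V(G)\cup F(G)$ with edges $vF$ whenever $v\in V(F)$; a direct check gives $d_R(x,y)=2s+2$. For $0\le i<s$, define
\[
B_i \;=\; \bigcup\bigl\{\,\overline{F}:F\text{ is a face of }G\text{ with }d_R(x,F)\le 2i+1\,\bigr\}.
\]
Each $B_i$ is a closed region with $x$ in its interior (every face incident to $x$ is in $B_0\subseteq B_i$) and disjoint from $y$ (any face $F$ with $y\in V(F)$ has $d_R(x,F)\ge 2s+1>2i+1$). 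Hence $\partial B_i$ contains a closed walk in $G$ separating $x$ from $y$, from which one extracts a cycle $C_i$ still enclosing $x$.

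The heart of the argument is verifying that $C_0,\dots,C_{s-1}$ are pairwise vertex-disjoint. The key observation is that any vertex $v\in\partial B_i$ must be incident both to some face $F$ in $B_i$ (with $d_R(x,F)\le 2i+1$) and to some face $F'$ not in $B_i$ (with $d_R(x,F')\ge 2i+3$); since $v$ is adjacent in $R(G)$ to both, this pins down $d_R(x,v)=2i+2$. So $V(\partial B_i)$ lies entirely in the BFS layer of $R(G)$ at distance $2i+2$ from $x$, and these layers are pairwise disjoint for $0\le i<s$, giving vertex-disjointness of the $C_i$. I expect the only real technical nuisance to be the cycle-extraction step: the boundary walk bounding $B_i$ on the side of $y$ may pinch at vertices, but cutting it at the first vertex repetition produces a simple cycle in $G$ still separating $x$ from $y$, lying in $G-x-y$ because $x$ is interior to $B_i$ and $y$ is disjoint from it.
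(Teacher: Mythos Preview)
The paper does not actually prove this proposition; it simply states it and refers the reader to \cite{CM2}. So there is no in-paper argument to compare against, and your sketch has to be judged on its own merits.

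Your two halves are the natural ones and are essentially correct. The nesting argument for $d'(x,y)\ge r+1$ is clean. For $d'(x,y)\le r+1$, the radial-graph BFS construction is the right idea, and your key observation that every vertex of $\partial B_i$ has $d_R(x,v)=2i+2$ is exactly what forces the layers to be pairwise disjoint.

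The one place that needs tightening is precisely the step you flagged: ``cutting at the first vertex repetition'' of the boundary walk need not yield a cycle that still encloses $x$. A safer route is to observe that $H_i:=\partial B_i$ is an \emph{even} subgraph of $G$ (around each vertex the incident faces alternate between ``in $B_i$'' and ``not in $B_i$'' an even number of times), hence $H_i$ decomposes into edge-disjoint cycles $D_1,\dots,D_m$. A parity count (crossing an edge of $H_i$ toggles membership in $B_i$) shows that an odd number of the $D_j$ have $x$ in their interior, so at least one does; take that as $C_i$. Since $y$ lies on the outer face of $G$, it lies in $\mathrm{ext}(C)$ for every cycle $C\subseteq G-y$, so $C_i$ automatically separates $x$ from $y$. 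With this adjustment, your proof goes through.
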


Let $\CC$ be a non-empty set of disjoint cycles of a graph $G$ that is
embedded in some surface, and let 
$C\subseteq G$ be the union of all cycles from \CC. Then \CC\ is 
\DEF{surface-separating} if there is a set of facial walks whose \DEF{sum}
(i.e., the symmetric difference of their edge-sets) is equal to $C$.
A set $\CC$ of disjoint cycles in an embedded graph $G$ is \DEF{homologically
independent} if no non-empty subset of $\CC$ is surface-separating.
We say that \CC\ is \DEF{induced and nonseparating} in the graph $G$ if
$C$ is an induced subgraph of $G$ and $G-V(C)$ is connected.

Induced and nonseparating cycles play a special role in
topological graph theory. In particular, if $G$ is a 3-connected planar graph,
then the induced and nonseparating cycles of $G$ are precisely those cycles that
form face boundaries. The following property generalizes this fact.

\begin{lemma}
\label{lem:facial or non-separating}
Let\/ \CC\ be a family of disjoint cycles in a graph $G$ such that
$C=\bigcup\CC$ is an induced and nonseparating subgraph of\/ $G$.
If\/ $G$ is embedded in some surface and no cycle in \CC\ is facial, then
$\CC$ is homologically independent.
\end{lemma}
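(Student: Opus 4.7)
The plan is to prove the contrapositive: assuming some non-empty $\CC'\subseteq\CC$ is surface-separating, I will show that some cycle of $\CC'$ is facial. Let $\mathcal{F}_1$ be the set of facial walks witnessing the sum, so the symmetric difference of their edge-sets equals $E(C')$, where $C':=\bigcup\CC'$; let $\mathcal{F}_2$ denote the complementary set of facial walks. Topologically, the closure of the union of the faces in $\mathcal{F}_1$ forms a compact subsurface $U\subseteq S$ whose boundary is $C'$, and $V:=\overline{S\setminus U}$ is the complementary subsurface with the same boundary.

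The heart of the argument is to show that the interior of $V$ contains no vertex or edge of $G$. Since every edge incident to $V(H)=V(G)\setminus V(C)$ lies outside $E(C')$, both of its sides lie in $\mathcal{F}_1$ or both lie in $\mathcal{F}_2$; connectedness of $H=G-V(C)$ then places all of $V(H)$ into a single region, which we may take to be $\mathrm{int}(U)$. The more delicate step---and the main obstacle---concerns a cycle $Q\in\CC\setminus\CC'$: $Q$ is disjoint from $C'$, so as a connected subset of $S$ it lies entirely in $\mathrm{int}(U)$ or in $\mathrm{int}(V)$. If $Q\subseteq\mathrm{int}(V)$, then, using that $C$ is induced, every vertex $v\in V(Q)$ has its non-$Q$ neighbors in $V(H)\subseteq\mathrm{int}(U)$, and any such incident edge would be forced to cross the separating set $C'$---impossible in an embedding. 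Hence every such $v$ must have $\deg_G(v)=2$, making $Q$ a connected component of $G$, which is impossible when $G$ is connected with $V(H)\neq\emptyset$. Therefore $Q\subseteq\mathrm{int}(U)$ too.

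With the interior of $V$ empty of $G$, the faces in $\mathcal{F}_2$ tile $V$ by topological disks whose boundary walks use only edges of $C'$; moreover each edge of $C'$ is used exactly once across these walks, since its two sides are split between $\mathcal{F}_1$ and $\mathcal{F}_2$. Fix any $F\in\mathcal{F}_2$ (this set is non-empty, or else $\mathcal{F}_1$ would comprise every facial walk and $E(C')$ would be empty). The boundary walk of $F$ is a non-empty closed walk in $C'$ using each edge at most once; since $C'$ is a vertex-disjoint union of cycles, such a walk must lie inside a single $Q\in\CC'$ and, being Eulerian in $Q$, must traverse all of $Q$. Hence $Q$ is facial, contradicting the hypothesis. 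The degenerate case $V(H)=\emptyset$ collapses $G$ to a single cycle, which admits a $2$-cell embedding only on the sphere and is then itself facial.
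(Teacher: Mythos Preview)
The paper states this lemma without proof, so there is no argument to compare yours against. Your contrapositive approach is correct and is the natural one: partitioning the faces into $\mathcal{F}_1$ and $\mathcal{F}_2$ according to a witnessing sum for $C'$, then using that $C$ is induced (no chords, no edges between distinct cycles of $\CC$) together with the connectedness of $H=G-V(C)$ to force every vertex and edge of $G\setminus C'$ onto the $\mathcal{F}_1$ side. The observation that an edge appearing twice on a single facial walk cannot lie in $C'$ is exactly what is needed to conclude that any face in $\mathcal{F}_2$ has a boundary walk that is a closed trail inside a single cycle $Q\in\CC'$, hence equals $Q$. One small point worth making explicit is that $G$ is connected because the embedding is assumed to be a $2$-cell embedding (stated in the paper's introduction); you use this implicitly when ruling out the possibility that some $Q\in\CC\setminus\CC'$ is a whole component of $G$, and again in the degenerate case $V(H)=\emptyset$.
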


The next lemma is taken from \cite{Mo92}. For completeness we include its proof.

\begin{lemma}
\label{lem:homologically independent}
Let\/ $G$ be a graph embedded in a surface of genus $g$ (either orientable, or 
non-orienta\-ble). If $Q_1,\ldots,Q_k$ are pairwise disjoint cycles in $G$ that
are homologically independent, then $k\leq g$. 
\end{lemma}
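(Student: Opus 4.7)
The plan is to translate ``homologically independent'' into linear independence in the first $\mathbb{F}_2$-homology of $S$. The embedding equips $S$ with a CW-structure whose $0$-cells are $V(G)$, $1$-cells are $E(G)$, and $2$-cells are the faces; a routine computation in this cellular chain complex with $\mathbb{F}_2$ coefficients identifies $H_1(S;\mathbb{F}_2)$ with $Z_1(G)/B$, where $Z_1(G)$ is the cycle space of the graph and $B$ is the subspace spanned by the facial walks. Because the $Q_i$ are pairwise disjoint, the $\mathbb{F}_2$-chain $\sum_{i\in I} Q_i$ coincides with the edge set of $\bigcup_{i\in I} Q_i$, so the hypothesis that no non-empty subfamily is surface-separating is exactly the statement that $[Q_1],\dots,[Q_k]$ are linearly independent in $H_1(S;\mathbb{F}_2)$.

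The Euler-characteristic identity $\dim_{\mathbb{F}_2} H_1(S;\mathbb{F}_2) = 2 - \chi(S)$, obtained from the ranks of the cellular boundary maps, handles the non-orientable case immediately: here $\chi(S) = 2-g$, so $\dim H_1 = g$ and $k \le g$ follows.

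In the orientable case $\dim H_1(S;\mathbb{F}_2) = 2g$, and a crude dimension count only gives $k \le 2g$. The extra input I would use is the mod-$2$ intersection form $\omega$ on $H_1(S;\mathbb{F}_2)$: this is non-degenerate and alternating, and any two disjoint simple closed curves have $\omega$-pairing zero. Hence $[Q_1],\dots,[Q_k]$ span a totally isotropic subspace of a $2g$-dimensional symplectic $\mathbb{F}_2$-vector space, and such a subspace has dimension at most $g$, giving $k\le g$.

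The main obstacle is that the intersection-form step imports algebraic-topological machinery that the paper otherwise avoids. A more self-contained alternative, probably the route actually taken in \cite{Mo92}, is induction on $g$: cut $S$ along $Q_1$ (which is non-separating and, in the orientable case, two-sided) and cap the resulting boundary circle(s) with disks to produce a closed surface $S'$ of smaller genus. One then checks that $Q_2,\dots,Q_k$ remain pairwise disjoint cycles of the induced embedding in $S'$ and that any surface-separating subfamily of them in $S'$ would, after accounting for the two new capping faces, yield a surface-separating subfamily of $\{Q_1,\dots,Q_k\}$ in $S$, contradicting the hypothesis. The induction closes at $g=0$, where every cycle on the sphere is the boundary of a $2$-cell and therefore lies in $B$, forcing $k=0$.
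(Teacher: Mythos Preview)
Your argument is correct, but the paper's proof is quite different and considerably more elementary. Rather than invoking $H_1(S;\mathbb{F}_2)$ and the intersection form, the paper cuts the surface along all of $Q_1,\dots,Q_k$ simultaneously. Homological independence guarantees that the cut surface remains connected; counting boundary components ($2k$ in the orientable case, at least $k$ in the non-orientable case) and then capping each with a disk gives a closed surface $\Sigma'$ with $\chi(\Sigma') = \chi(\Sigma) + b \le 2$, from which $k \le g$ drops out in one line in either case. Your inductive alternative (cut along one $Q_i$ at a time) is close in spirit to this, but still not what the paper does.

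The trade-off: your homology-plus-intersection-form route explains conceptually \emph{why} the orientable bound is $g$ rather than $2g$ (the $Q_i$ span an isotropic subspace of a symplectic space), and generalises cleanly to non-disjoint cycles. The paper's cut-and-cap argument avoids any algebraic-topological prerequisites and is entirely self-contained---appropriate here, since the rest of the paper never needs homology machinery. The fact you singled out as ``the main obstacle'' (importing the intersection form) is exactly what the paper sidesteps by the direct geometric count of boundary components after cutting.
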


\begin{proof}
Since the cycles $Q_i$ are homologically independent,
it follows that after cutting the surface $\Sigma$ along these cycles,
we obtain a connected surface with boundary, having $2k$ boundary components 
if $\Sigma$ is orientable and having at least $k$ boundary components
in the non-orientable case. Denote their number by 
$b$. If we paste a disc on each of the boundary components we get a closed 
surface $\Sigma'$ whose Euler characteristic is equal to 
$\chi(\Sigma')=\chi(\Sigma)+b\leq 2$. If $\Sigma$ is orientable then
$2k = b\leq 2-\chi(\Sigma)=2-(2-2g)=2g$. In the non-orientable case 
we have $k \leq b\leq 2-\chi(\Sigma)=g$, which proves the claimed 
inequality in either case.
\end{proof}

\section{Strong embeddings of near-planar graphs}

Conjecture \ref{conj:1} holds for planar graphs. Namely, every bridgeless cubic
graph is 2-connected, and every embedding of a 2-connected graph in the plane is 
strong. This is no longer true on the torus. Figure \ref{fig:1} shows two 
embeddings of a non-planar cubic graph in the torus. One is strong, and the other
one is not.

\begin{figure}
\centering
\includegraphics[width=0.7\textwidth]{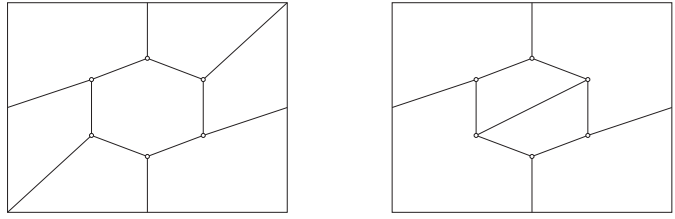}
\caption{$K_{3,3}$ has strong and non-strong embeddings in the torus}
 \label{fig:1}
\end{figure}

As mentioned in the introduction, a cubic toroidal graph admits a non-strong embedding 
in the torus if and only if it is near-planar. The following result gives rise
to a variety of counterexamples to Conjecture \ref{conj:1}.

\begin{theorem}
\label{thm:1}
Let\/ $G$ be a near-planar graph with $xy\in E(G)$ being a planarizing edge.
Suppose that $G-xy$ is a subdivision of a 3-connected graph and let $q=d'(x,y)$
be the facial distance between $x$ and $y$ in the (unique) planar embedding of\/
$G-xy$. Then every strong embedding of $G$ (either in an orientable or in 
a non-orientable surface) has genus at least $\lfloor \tfrac{1}{3}q\rfloor$.
\end{theorem}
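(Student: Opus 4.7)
The plan is to produce, for any strong embedding $\Pi$ of $G$ in a surface $\Sigma$ of genus $g$, a family $\mathcal{C}$ of pairwise disjoint cycles of $G$ with $|\mathcal{C}| \ge \lfloor q/3 \rfloor$ such that the union $C = \bigcup \mathcal{C}$ is an induced and nonseparating subgraph of $G$ and no member of $\mathcal{C}$ is a face boundary of $\Pi$. Lemma~\ref{lem:facial or non-separating} then gives that $\mathcal{C}$ is homologically independent in $\Pi$, and Lemma~\ref{lem:homologically independent} yields $g \ge |\mathcal{C}| \ge \lfloor q/3 \rfloor$, which is the desired conclusion.

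First I would set up the planar side. Since $G - xy$ is a subdivision of a 3-connected graph, by Whitney's theorem its planar embedding is unique up to reflection. Proposition~\ref{prop:1} supplies $r = q - 1$ pairwise vertex-disjoint cycles $Q_1, \ldots, Q_{q-1}$ in $G - x - y$, each separating $x$ from $y$ in the plane; by vertex-disjointness they can be listed in a nested manner, with $Q_i$ lying strictly inside $Q_{i+1}$. By a canonical choice --- for instance, taking $Q_i$ to be the outer boundary cycle of the union of faces at facial distance at most $i - 1$ from a face containing $x$, and invoking 3-connectedness to ensure this boundary is a single simple cycle --- I would arrange that each $Q_i$ is induced in $G - xy$, that no edge of $G - xy$ joins $V(Q_i)$ to $V(Q_j)$ when $|i - j| \ge 2$ (since such an edge would cross an intervening nested cycle in the plane), and that no vertex of $G - xy - \{x, y\}$ lies strictly between consecutive $Q_i, Q_{i+1}$ in the plane (a refinement that absorbs each annular region into the adjoining layer boundaries).

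Next I would partition $\{1, \ldots, q - 1\}$ into consecutive blocks $B_s = \{3s - 2, 3s - 1, 3s\}$, yielding $k = \lfloor q/3 \rfloor$ full blocks, and from each block pick a representative $R_s \in \{Q_{3s-2}, Q_{3s-1}, Q_{3s}\}$ that is \emph{not} a face boundary of $\Pi$. The central sub-claim --- and the main obstacle of the proof --- is that among any three consecutive planar cycles $Q_{j-1}, Q_j, Q_{j+1}$, at least one is non-facial in $\Pi$. I would argue this by contradiction: if all three were facial, each would bound a 2-cell of $\Pi$, and combining these three discs with the two faces $F_1, F_2$ of $\Pi$ incident to $xy$ --- whose boundaries minus $xy$ are two internally disjoint $x$-$y$ paths meeting every $Q_j$ --- would force a local planar patch of $\Sigma$ that re-embeds a non-planar subgraph of $G$ planarly, contradicting near-planarity of $G$ (or equivalently, it yields an Euler-characteristic conflict against the nested structure of the remaining $\Pi$-faces).

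Finally I would verify the hypotheses of Lemma~\ref{lem:facial or non-separating} for $\mathcal{C} = \{R_1, \ldots, R_k\}$. Induced: any edge of $G$ joining two distinct $R_s, R_t$ would correspond to an edge between two of the original planar cycles at index-distance at least three, which is ruled out by the planar separation in step one; chords inside a single $R_s$ are excluded by the canonical choice. Nonseparating: by the exhaustion step in stage one, all vertices of $V(G) \setminus V(C)$ lie either strictly inside $R_1$ (together with $x$) or strictly outside $R_k$ (together with $y$), so $G - V(C)$ has at most two components, which the edge $xy$ fuses into a single connected piece. Non-facial by the choice of representatives. Lemma~\ref{lem:facial or non-separating} then gives homological independence of $\mathcal{C}$ in $\Pi$, and Lemma~\ref{lem:homologically independent} yields $g \ge k \ge \lfloor q/3 \rfloor$, completing the proof.
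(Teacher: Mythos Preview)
Your argument has a fatal flaw: the concentric cycles $Q_i$ are the wrong family to feed into Lemma~\ref{lem:facial or non-separating}. First, the nonseparating check fails. Each $Q_i$ separates the plane, so deleting $C=R_1\cup\cdots\cup R_k$ from $G-xy$ splits what remains into $k+1$ annular pieces; the single edge $xy$ reconnects only the innermost to the outermost, leaving the $k-1$ intermediate annuli as separate components of $G-V(C)$ whenever $k\ge 2$. Your ``exhaustion'' refinement only eliminates vertices strictly between \emph{consecutive} $Q_i,Q_{i+1}$; the vertices on the non-selected $Q_i$'s lying between $R_s$ and $R_{s+1}$ are still present and are neither inside $R_1$ nor outside $R_k$. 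Second, the induced check also fails as stated: if $R_s=Q_{3s}$ and $R_{s+1}=Q_{3s+1}$ their index-distance is $1$, not at least $3$, and edges between consecutive nested cycles certainly exist. Finally, the central sub-claim that one of any three consecutive $Q_i$ must be non-$\Pi$-facial is only gestured at; neither the ``local planar patch'' nor the ``Euler-characteristic conflict'' is an argument.

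The paper avoids all of this by working with \emph{planar faces} of $G-xy$ rather than the nested separating cycles $Q_i$. For each $i=1,\dots,r-1$ it produces a planar face $F_i$ in the annulus between $Q_i$ and $Q_{i+1}$ that is not $\Pi$-facial, via a one-line pigeonhole: a $\Pi$-facial cycle $F$ through $xy$ contains an edge $e$ in that annulus; $e$ lies on exactly two $\Pi$-faces and on exactly two planar faces, but $F$ is one of the former and (containing $xy$) none of the latter, so at least one planar face through $e$ is not $\Pi$-facial. Because $G-xy$ is a subdivided $3$-connected graph, every planar face is an induced \emph{nonseparating} cycle of $G$ --- precisely the property your $Q_i$'s lack --- and this is what makes the union $F_1\cup F_4\cup F_7\cup\cdots$ induced and nonseparating: spacing by three guarantees the selected $F_{3j-2}$ are pairwise at distance at least $2$, one can route around each $F_{3j-2}$ within its own annulus since it is nonseparating, and the $Q_j$'s link the annuli together.
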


\begin{proof}
Let $Q_1,\dots, Q_r$ ($r=q-1$) be the cycles guaranteed by Proposition \ref{prop:1},
enumerated such that $Q_i$ lies in the interior of $Q_{i+1}$ for $1\le i<r$.
Suppose that $G$ has a strong embedding $\Pi$ in a surface of genus 
$g$, and consider a facial cycle $F$ containing the edge $xy$. 
Note that $F$ intersects all cycles $Q_1,\dots, Q_r$ and hence contains,
for each $i\in \{1,\dots,r-1\}$, a path $R_i$ joining $Q_i$ and $Q_{i+1}$.
Since every edge belongs to two facial cycles only,
this implies that there is a facial cycle $F_i$ in the planar embedding of
$G-xy$ that contains an edge in $R_i$ and is not $\Pi$-facial. 
Since $G-xy$ is a subdivided 3-connected graph, the cycle $F_i$ is induced
and nonseparating. By Lemma \ref{lem:facial or non-separating}, $F_i$ is 
surface-non-separating under the embedding~$\Pi$.

Let us now consider the cycles $F_1, F_4, F_7, \dots, F_{3k-2}$,
where $k = \lfloor (r+1)/3\rfloor$. Since each $F_i$ is induced and 
$F_1,F_4, F_7,\dots$ are at distance at least two from each other,
the union $C=F_1 \cup F_4 \cup \cdots \cup F_{3k-2}$ of these cycles
is an induced subgraph of $G$. Next, we argue that $C$ is 
nonseparating in $G$. To see that, let $u\in V(G-C)$. There is an index
$i\in\{0,1,\dots,r\}$ such that $u\in int(Q_{i+1})\cap ext(Q_i)$
(where $Q_0=\{x\}$, $Q_{r+1}=\{y\}$, and $ext(Q_0) = int(Q_{r+1}) = G$).
Since $F_i$ is nonseparating in $G$, there is a path from $u$ to
$Q_i$ in $G-C$. Similarly, for each $j=0,1,\dots,r$, there is a path
in $G-C$ from $Q_j$ to $Q_{j+1})$. This easily implies that $G-C$ is connected
and proves that $C$ is nonseparating. 

Since $C$ is an induced and nonseparating subgraph of $G$ and none of the
cycles forming $C$ is $\Pi$-facial, Lemma \ref{lem:facial or non-separating} 
shows that the cycles forming $C$ are homologically independent. By Lemma 
\ref{lem:homologically independent}, we conclude that $k\le g$, 
which we were to prove.
\end{proof}

\begin{figure}
\centering
\includegraphics[width=0.4\textwidth]{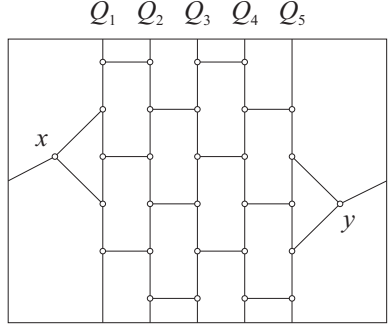}
\caption{A near-planar graph $G_2$}
 \label{fig:2}
\end{figure}

\begin{corollary}
\label{cor:1}
For every integer $n$, there exists a near-planar cubic graph\/ $G_n$ of order\/
$18n-6$ with\/ $\sg(G)\ge n$.
\end{corollary}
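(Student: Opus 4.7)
The plan is to exhibit, for each $n \geq 1$, a near-planar cubic graph $G_n$ on $18n - 6$ vertices satisfying the hypotheses of Theorem \ref{thm:1} with $d'(x, y) \geq 3n$, and then conclude $\sg(G_n) \geq \lfloor 3n/3 \rfloor = n$ directly from Theorem \ref{thm:1}.

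The construction I have in mind is a ``hexagonal tube'' $G_n$ consisting of $3n - 1$ pairwise disjoint, nested hexagonal $6$-cycles $H_1, \ldots, H_{3n-1}$ in the plane (with $H_i$ in the interior of $H_{i+1}$), joined by the standard alternating pattern of rung edges that makes the resulting planar graph cubic except at two distinguished degree-$2$ vertices $x$ and $y$ placed so that $x$ lies interior to $H_1$ and $y$ exterior to $H_{3n-1}$. The local ``pole'' gadgets at $x$ and $y$ are chosen so that the total vertex count is exactly $6(3n-1) = 18n - 6$ and so that each $H_i$ remains a cycle of $G_n - x - y$ that separates $x$ from $y$ in the planar embedding. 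Adding the edge $xy$ --- which cannot be drawn in the plane without crossing some $H_i$ --- produces the cubic near-planar graph $G_n$, with $xy$ as planarizing edge. (Figure \ref{fig:2} illustrates the case $n = 2$, a $30$-vertex graph built from five such nested hexagons.)

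To apply Theorem \ref{thm:1}, observe that $G_n - xy$ has $x$ and $y$ as its only degree-$2$ vertices, and suppressing them yields a cubic planar graph whose $3$-connectivity follows from the rigidity of the hexagonal-tube structure (no $2$-vertex cut separates the two poles through the densely connected nested rings). Hence $G_n - xy$ is a subdivision of a $3$-connected graph. By Proposition \ref{prop:1}, the $3n - 1$ pairwise disjoint $(x,y)$-separating cycles $H_1, \ldots, H_{3n-1}$, all contained in $G_n - x - y$, give $d'(x, y) \geq (3n - 1) + 1 = 3n$. Theorem \ref{thm:1} then yields $\sg(G_n) \geq \lfloor 3n/3 \rfloor = n$, as required.

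The hard part is designing the two pole gadgets so that all four properties --- cubicness of $G_n$, the exact order $18n - 6$, $3$-connectivity after suppressing $x$ and $y$, and the preservation of $3n - 1$ pairwise disjoint separating cycles in $G_n - x - y$ --- hold simultaneously. Once such a gadget is fixed, the corollary is an immediate consequence of Theorem \ref{thm:1} and Proposition \ref{prop:1}.
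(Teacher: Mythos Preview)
Your approach is exactly the paper's: build a near-planar cubic graph with $3n-1$ pairwise disjoint cycles separating $x$ from $y$, invoke Proposition~\ref{prop:1} to get $d'(x,y)=3n$, and apply Theorem~\ref{thm:1}. The paper's proof is in fact even terser than yours---it simply points to Figure~\ref{fig:2} and asserts the cycles are there.

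The one genuine gap is that you have not actually exhibited the graph. You explicitly defer the ``hard part'' (the pole gadgets), and your vertex accounting is inconsistent: the $3n-1$ hexagons already use up all $6(3n-1)=18n-6$ vertices, so $x$ and $y$ cannot be additional vertices lying strictly interior to $H_1$ and exterior to $H_{3n-1}$ as you describe. Either $x$ and $y$ lie on the tube itself (in which case the separating cycles in $G-x-y$ are not simply the $H_i$), or the caps are not hexagons, or the order is not $18n-6$. The paper sidesteps this by drawing $G_2$ explicitly; once such a picture is fixed, verifying the hypotheses of Theorem~\ref{thm:1} (including that $G_n-xy$ is a subdivision of a $3$-connected graph) is routine. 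Your write-up would be complete once you replace the placeholder description with an actual construction whose vertex count and cap structure are consistent.
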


\begin{proof}
Let $G_n$ be the near-planar graph whose toroidal embedding is shown in Figure 
\ref{fig:2} (for $n=2$). The graph has $r=3n-1$ disjoint cycles that show that
$d'(x,y) = r+1$ in the planar embedding of $G_n-xy$. Now, Theorem \ref{thm:1} applies.
\end{proof}

Let us observe that Euler's formula implies that {\em every} orientable
(2-cell) embedding of the graphs $G_n$ from Corollary \ref{cor:1}
has genus at most $6n-2$. The largest possible genus of a strong embedding
of $G_n$ is $6n-3$, and this bound is attained if and only if $G_n$ has
a cycle double cover consisting of three Hamilton cycles of $G_n$.

\section{Examples of higher genus}

For embeddings in surfaces of higher genus, we need an additional 
notion. An embedding of a graph is \DEF{polyhedral} if every facial 
walk is an induced and nonseparating cycle of the graph.


\begin{theorem}
\label{thm:2}
Let\/ $G$ be a graph  and $xy\in E(G)$. Suppose that $G-xy$ has
a polyhedral embedding in some surface, and let $q=d'(x,y)$
be the facial distance between $x$ and $y$ under this embedding. 
Then every strong embedding of $G$ has genus at least 
$\lfloor \tfrac{1}{3}q\rfloor$.
\end{theorem}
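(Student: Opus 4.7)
The plan is to follow the skeleton of the proof of Theorem \ref{thm:1}, replacing Proposition \ref{prop:1} by its surface counterpart. The first task is to produce, in the polyhedral embedding $\Pi_0$ of $G - xy$, a family of $r = q-1$ pairwise vertex-disjoint cycles $Q_1,\dots,Q_r$ in $G - xy$, each null-homotopic in the underlying surface $\Sigma_0$ and bounding a closed disk $\overline{D_i}$ whose interior contains $x$ but not $y$, with the disks nested as $D_1 \subset D_2 \subset \cdots \subset D_r$. The natural construction is a face-distance BFS from $x$: the union of closed faces at face-distance at most $i-1$ from $x$ is the candidate for $\overline{D_i}$, and polyhedrality is precisely the hypothesis that guarantees its topological boundary is a single cycle $Q_i$ of $G - xy$. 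A version of this is already implicit in the surface Menger-type result of \cite{CM2} cited just before Proposition \ref{prop:1}.

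Once the $Q_i$'s are in hand, the remaining argument mirrors that of Theorem \ref{thm:1}. Let $\Pi$ be a strong embedding of $G$ in a surface of genus $g$, and let $F$ be the $\Pi$-face containing $xy$; its boundary, minus $xy$, is a path $P$ from $x$ to $y$ in $G - xy$. Since each $Q_i$ topologically separates $x$ from $y$ in $\Sigma_0$, $P$ meets every $Q_i$ and decomposes into subpaths $R_i$ joining $Q_i$ to $Q_{i+1}$. For any edge $e \in R_i$, both of its $\Pi_0$-faces lie in $G - xy$, while one $\Pi$-face of $e$ is $F$ (which contains $xy$); hence at least one $\Pi_0$-face of $e$ is not $\Pi$-facial---call it $F_i$. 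By polyhedrality $F_i$ is an induced nonseparating cycle of $G - xy$, and therefore also of $G$. Picking $F_1, F_4, \dots, F_{3k-2}$ with $k = \lfloor (r+1)/3 \rfloor$, the nesting of the $Q_i$'s localizes each $F_i$ to the annular region between $Q_i$ and $Q_{i+1}$, so the chosen cycles lie at pairwise distance at least two in $G$; exactly as in the proof of Theorem \ref{thm:1}, their union $C$ is then induced, and $G - V(C)$ is connected via paths through the intermediate $Q_j$'s. Lemma \ref{lem:facial or non-separating} gives that the $k$ selected $\Pi_0$-faces are homologically independent under $\Pi$, and Lemma \ref{lem:homologically independent} yields $k \le g$, proving $g \ge \lfloor q/3 \rfloor$.

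The principal obstacle is the first step: producing the nested null-homotopic cycles $Q_i$ on an arbitrary surface. In the planar setting Proposition \ref{prop:1} delivers them via a clean duality statement, but in general one has to use polyhedrality to control both the combinatorial structure (each layer boundary being a single cycle of $G - xy$ rather than some more complicated subgraph) and the topology (each BFS layer remaining a disk rather than acquiring handles before $y$ is reached). Once this geometric bookkeeping is settled, the rest of the argument is essentially a line-by-line transcription of the proof of Theorem \ref{thm:1}.
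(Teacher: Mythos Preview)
Your overall skeleton matches the paper's proof: build BFS ``layers'' $Q_i$ around $x$ in the polyhedral embedding $\Pi_0$, find a non-$\Pi$-facial $\Pi_0$-face $F_i$ between $Q_i$ and $Q_{i+1}$, thin to every third one, and apply Lemmas~\ref{lem:facial or non-separating} and~\ref{lem:homologically independent}. That part is fine.

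The gap is exactly where you flag the ``principal obstacle'', but your proposed resolution is wrong. Polyhedrality does \emph{not} force the $i$-th BFS layer to be a disk, nor its boundary to be a single null-homotopic cycle. Think of a large quadrangulation of the torus and run face-BFS from a vertex $x$: once the growing region wraps around a handle, the boundary becomes a disjoint union of two (homologically nontrivial but jointly separating) cycles, not one contractible cycle; in non-cubic graphs the boundary need not even be a disjoint union of cycles. So the ``geometric bookkeeping'' you allude to cannot be settled---those properties are simply false in general.

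The paper sidesteps this by weakening what is demanded of the $Q_i$. It defines $int(Q_i)$ recursively as $int(Q_{i-1})$ together with all $\Pi_0$-facial cycles meeting $Q_{i-1}$, and sets $Q_i$ to be the \emph{sum} (symmetric difference) of those faces. Each $Q_i$ is then merely an Eulerian subgraph of $G-x-y$; in the cubic case it is a disjoint union of cycles, but in general not even that. What survives, and is all that the rest of your argument actually uses, is that $Q_i$ is surface-separating in $\Sigma_0$ with $x$ on one side and $y$ on the other, and that $Q_1,\dots,Q_r$ are pairwise disjoint and nested. With this correction your remaining steps---finding $R_i$ and $F_i$, checking that $F_1\cup F_4\cup\cdots$ is induced and nonseparating, and invoking the two lemmas---go through verbatim, which is exactly what the paper does.
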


\begin{proof}
The proof is essentially the same as the proof of Theorem \ref{thm:1}.
Let us only remark the main differences. First of all, there are pairwise disjoint
subgraphs $Q_1,\dots,Q_r$ ($r=q-1$), where each $Q_i$ is a union of cycles (i.e. an
Eulerian subgraph) of $G-x-y$ that separates the surface so that one part 
(called the \DEF{interior} of $Q_i$, $int(Q_i)$) contains $x$ and the
previous cycles $Q_1,\dots,Q_{i-1}$, and the other part of the surface 
(the \DEF{exterior} $ext(Q_i)$) contains $y$ and $Q_{i+1},\dots,Q_r$. 
These subgraphs $Q_i$ are obtained as follows. 
We set $Q_0=\{x\}$. For $i\le r$, having constructed $Q_{i-1}$, we take
$int(Q_{i-1})$ together with all facial cycles that intersect $Q_{i-1}$
and denote this subgraph of $G$ by $int(Q_i)$. The sum of all facial cycles
forming $int(Q_i)$ is a surface-separating subgraph $Q_i$ of $G$.
(If $G$ is a cubic graph, then $Q_i$ is disjoint union of one or more cycles,
but in general, it is just an Eulerian subgraph of $G$.)

The rest of the proof is the same as for Theorem~\ref{thm:1}.
\end{proof}

\section{Concluding remarks}

In view of the Cycle double cover conjecture and its embedding counterpart
(Conjecture \ref{conj:oriCDC}), it is of interest to show that (cubic) graphs
of small genus admit strong embeddings in some surface. Some existing work
in this area is \cite{Z1,Z2}. Let us remark that for cubic graphs this follows 
from known results about possible counterexamples to the Cycle double cover
conjecture. Goddyn \cite{Go} proved that a minimum counterexample has
girth at least 10, and Huck \cite{Hu} extended this further by proving that
its girth is at least 12. The reductions used in those proofs can be made
so that embeddability in a fixed surface is preserved. They can be summarized
as follows.

\begin{theorem}
\label{thm:Huck}
Let $g\ge0$ be an integer. If there is a 2-edge-connected graph whose genus 
(or non-orientable genus) is at most $g$ and that does not have a cycle double
cover, then there is such a graph $G$ with the following properties:
\begin{itemize}
\item[\rm (a)] $G$ is cubic and\/ $3$-connected.
\item[\rm (b)] $G$ has girth at least 12.
\end{itemize}
\end{theorem}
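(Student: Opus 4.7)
The plan is to run the minimum counterexample arguments of Goddyn~\cite{Go} and Huck~\cite{Hu} inside the class of 2-edge-connected graphs embeddable in a surface of (orientable or non-orientable) genus at most $g$. Take $G$ to be such a counterexample with $|E(G)|$ minimum. The goal is then to show that each of the reductions used in~\cite{Go,Hu} not only strictly decreases $|E(G)|$ and preserves the non-existence of a cycle double cover (which is what is proved in those papers), but also preserves embeddability in a surface of genus at most $g$. Granting this, $G$ must already be cubic, 3-connected, and of girth at least $12$, giving the theorem.

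The reductions to check are the standard ones: (i)~suppressing a degree-$2$ vertex, (ii)~splitting off a pair of edges at a vertex of degree at least $4$ so as to preserve 2-edge-connectivity, (iii)~contracting an edge of a $2$- or $3$-edge-cut, and (iv)~the local surgeries performed on short cycles to push the girth up to $12$. For (i), vertex suppression merely undoes a subdivision, so any embedding is preserved. For (iii), edge contraction never increases the genus of an embedding. For (iv), each of Huck's cycle-reduction configurations is of bounded size, and its modification can be carried out inside a disk neighborhood of the cycle in the given surface, so the resulting graph embeds on the same surface.

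The hard step is (ii). Fleischner's edge-splitting theorem guarantees a pair of edges at $v$ that may be split off while maintaining 2-edge-connectivity, but for the resulting graph to inherit an embedding of genus at most $g$ we need a splittable pair that is also \emph{consecutive} in the rotation at $v$: only then can the replacement edge be drawn within the adjacent facial corner without raising the genus. The plan here is to combine the flexibility of Fleischner's theorem (which produces many admissible pairs at vertices of sufficiently high degree) with a pigeonhole argument over the cyclic rotation at $v$, and, if no admissible pair happens to be consecutive, to perform a local re-embedding at $v$ (a Whitney-type flip or a boundary-walk rerouting) that brings an admissible pair into consecutive position without increasing the genus. Carrying this out while simultaneously keeping track of the non-existence of a cycle double cover is the technical core of the theorem, and amounts to a careful embedded version of the Goddyn--Huck reductions.

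Once (i)--(iv) have been verified to stay inside the class of graphs of genus at most $g$, minimality of $G$ forces $G$ to be cubic, 3-connected, and of girth at least $12$, proving both (a) and (b).
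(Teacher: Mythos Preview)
The paper does not prove this theorem at all. It is stated as a summary of results from \cite{Go} and \cite{Hu}, accompanied only by the one-line assertion that ``the reductions used in those proofs can be made so that embeddability in a fixed surface is preserved.'' Your proposal therefore already goes beyond what the paper supplies.

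That said, your sketch has real gaps at exactly the places where the work lies.

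\smallskip
\emph{Step~(iv).} You claim Huck's short-cycle surgeries ``can be carried out inside a disk neighborhood of the cycle in the given surface.'' This is false in general: a short cycle on a surface of positive genus may well be non-contractible, so no such disk exists. The honest argument has to go through a different route, typically by showing that each reduced graph is a minor of the original (contraction of the short cycle followed by further deletions), so that genus cannot increase; where a reduction genuinely adds edges, one must use the freedom in Huck's constructions to choose the added edges compatibly with the local rotation.

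\smallskip
\emph{Step~(ii).} You correctly flag this as the crux, but your proposed fix is not a proof. ``Whitney-type flips'' are a phenomenon of planar $2$-connected graphs; on a surface of positive genus there is no operation that freely permutes the rotation at a vertex without risking a change of genus, so you cannot simply ``bring an admissible pair into consecutive position.'' A cleaner way around the whole issue is to abandon splitting off in favour of \emph{vertex expansion}: replace a vertex $v$ of degree $d\ge4$ by a path of $d-2$ trivalent vertices, wired up according to the rotation at $v$. This manifestly preserves the embedding, keeps $2$-edge-connectedness, and a cycle double cover of the expanded graph contracts back to one of $G$ (split any resulting closed walk through $v$ into simple cycles at $v$). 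Since this step enlarges the graph, you then cannot run a single minimum-edge argument; instead, first expand once to reach a cubic graph, and only then minimize among cubic counterexamples of genus at most $g$.

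\smallskip
\emph{Step~(iii).} The standard reduction along a $2$-edge-cut is not ``contracting an edge of the cut''; it is deleting the two cut edges and suppressing the resulting degree-$2$ vertices, which splits $G$ into two smaller $2$-edge-connected cubic graphs, at least one of which still lacks a cycle double cover. Both pieces are minors of $G$, so genus is preserved for the right one. For cubic graphs this already yields $3$-connectivity; no separate $3$-edge-cut reduction is needed for~(a).
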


This yields the following straightforward corollary.

\begin{corollary}
\label{cor:2}
If a 2-edge-connected graph $G$ has genus at most 16 
or has nonorientable genus at most 33,
then it has a cycle double cover. In particular, if it is cubic, then it
admits a strong embedding in some surface.
\end{corollary}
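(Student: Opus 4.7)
The plan is to combine Theorem \ref{thm:Huck} with the Moore bound for high-girth cubic graphs and with Euler's formula; the three inequalities together rule out any counterexample of the prescribed genus.

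Specifically, suppose for contradiction that some 2-edge-connected graph of orientable genus at most $16$ (respectively nonorientable genus at most $33$) has no cycle double cover. By Theorem \ref{thm:Huck}, we may replace it by a counterexample $G$ that is cubic, $3$-connected, and has girth at least $12$, while still embedding in a surface of the same (small) genus. The classical Moore bound for cubic graphs of even girth $2k$ (grow a breadth-first-search tree of depth $k-1$ from an edge; the girth condition forbids any coincidences) yields
\[
n := |V(G)| \;\ge\; 2^{k+1} - 2 \;=\; 2^{7} - 2 \;=\; 126.
\]

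Now fix a 2-cell embedding of $G$ in a surface of its minimum genus and let $\chi$ be its Euler characteristic. Cubicity gives $|E(G)|=3n/2$, so Euler's formula reads $f = \chi + n/2$, and summing face lengths yields $12 f \le 2|E(G)| = 3n$, hence $f \le n/4$. Combining, $\chi \le -n/4 \le -31.5$, so $\chi \le -32$ since $\chi\in\ZZ$. In the orientable case $\chi = 2 - 2g$ forces $g \ge 17$, and in the nonorientable case $\chi = 2 - \tilde{g}$ forces $\tilde{g} \ge 34$; both contradict our assumption. Hence $G$ has a cycle double cover, proving the first assertion. The ``in particular'' clause is immediate from the remark in the introduction that any cycle double cover of a cubic graph determines a strong embedding in some (possibly nonorientable) surface.

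The argument has essentially no obstacle: Theorem \ref{thm:Huck} does all the nontrivial reduction work, and the numerical constants $16$ and $33$ are exactly those for which the Euler inequalities $n \le 8g-8$ and $n \le 4\tilde{g}-8$ just barely fail when $n\ge 126$. The only judgment required is to identify the right Moore-bound regime (even girth $2k=12$) and to verify that the embedding provided by the genus assumption is automatically a 2-cell embedding, which follows from the paper's global convention on embeddings.
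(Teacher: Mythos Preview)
Your proof is correct and follows essentially the same route as the paper's: reduce via Theorem~\ref{thm:Huck} to a cubic graph of girth at least $12$, invoke the Moore bound to get $n\ge 126$, and combine Euler's formula with the face-length inequality $12f\le 3n$ to force $\chi\le -32$. The only cosmetic difference is that you state the Moore bound via the edge-based BFS formula $2^{k+1}-2$, whereas the paper writes out the level-by-level vertex count summing to $126$.
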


\begin{proof}
By Theorem \ref{thm:Huck}, we may assume that $G$ is cubic and has girth at 
least 12. The girth condition implies that the order $n=|V(G)|$ of $G$ satisfies
$$
   n \ge 1+3+6+12+24+48+32 = 126.
$$
Since $G$ is cubic, $|E(G)|= \tfrac{3}{2}n$. Since every facial walk has length at
least 12, we have that $12f\le 2|E(G)| = 3n$, where $f$ denotes the number of facial
walks. By Euler's formula, $f=c-n + |E(G)| \ge \tfrac{1}{2}n +c$, where $c$ is the
Euler characteristic of the surface in which $G$ is embedded. This implies that
$-c\ge \tfrac{1}{2}n - f \ge \tfrac{1}{4}n\ge \tfrac{63}{2}$. In particular, 
the genus (or the nonorientable genus) is at least 17 (respectively, 34).
\end{proof}

The proofs of Goddyn \cite{Go} and Huck \cite{Hu} do not
preserve orientability of embeddings, so it remains an open problem if the last
conclusion of Corollary \ref{cor:2} can be strengthened by concluding
that a strong embedding in some orientable surface exists.
A recent paper by Ellingham and Zha \cite{EZ} discusses this problem
and provides a solution for projective-planar graphs.

The following computational problems are of interest.

\begin{problem}
\label{prb:1}
What is the computational complexity of determining if a given (cubic) graph admits 
a strong embedding in some (orientable) surface?
\end{problem}

If Conjecture \ref{conj:oriCDC} is true, then the answer to Problem \ref{prb:1}
is trivial---such an embedding exists if and only if the input graph is
2-edge-connected. On the other hand, if Conjecture \ref{conj:oriCDC} fails, it is
likely that the problem of existence of strong embeddings would be NP-hard.

In this note we have made a small step towards the study of the following
problems.

\begin{problem}
\label{prb:2}
For a given input graph $G$ that has at least one strong embedding, 
find a strong embedding of minimum genus.
\end{problem}

\begin{problem}
\label{prb:3}
For a fixed integer $g\ge0$ decide if a given input graph $G$, whose genus is at
most $g$, admits a closed 2-cell embedding of genus at most~$g$.
\end{problem}

This problem is trivial for $g=0$, and the methods of this paper may provide
a way to a fast recognition for the case $g=1$.


\begin{thebibliography}{99}

\bibitem{CM1} S. Cabello, B. Mohar,
Crossing and weighted crossing number of near-planar graphs,
in ``Graph Drawing, GD 2008,'' I.G. Tollis and M. Patrignani (Eds.),
LNCS 5417, Springer-Verlag, 2009, pp. 38--49.
doi:10.1007/978-3-642-00219-9.

\bibitem{CM2} S. Cabello, B. Mohar,
Crossing and weighted crossing number of near-planar graphs,
Algorithmica, to appear.

\bibitem{Di} R. Diestel, Graph Theory,
Springer-Verlag, New York, 1997.

\bibitem{EZ} M.N. Ellingham, X. Zha,
Orientable embeddings and orientable cycle double covers of
projective-planar graphs, 
arXiv:0911.2713v1[math.CO], 2009.

\bibitem{Go} L. Goddyn, Cycle covers of graphs,
Ph.D. Thesis, University of Waterloo, 1988.

\bibitem{Hu} A. Huck,
Reducible configurations for the cycle double cover conjecture,
Discrete Appl.\ Math.\ 99 (2000) 71--90.

\bibitem{Ja85} F.~Jaeger, 
A survey of the cycle double cover conjecture,
in ``Cycles in Graphs'' (B.~Alspach and C.~Godsil, Eds.),
Ann. Discrete Math. 27 (1985) 1--12.

\bibitem{Mo92} B.~Mohar,
Combinatorial local planarity and the width of graph embeddings,
Canad. J. Math. 44 (1992) 1272--1288.

\bibitem{MT} B. Mohar and C. Thomassen,
Graphs on Surfaces,
Johns Hopkins University Press, Baltimore, MD, 2001.

\bibitem{Ri}
R.B. Richter, The topology of embedded graphs, 
Ph.D.~Thesis, University of Waterloo, 1983.

\bibitem{RSS}
R.B. Richter, P.D. Seymour, and J. \v Sir\'a\v n,
Circular embeddings of planar graphs in nonspherical surfaces,
Discrete Math. 126 (1994) 273--280.
doi:10.1016/0012-365X(94)90271-2.

\bibitem{Se} P.D. Seymour, Sums of circuits,
in Graph Theory and Related Topics 
(Proc. Conf., Univ. Waterloo, Waterloo, Ont., 1977),
pp.~341--355, Academic Press, 1979.

\bibitem{Xu} N.H. Xuong, 
Sur quelques problemes d'immersion d'un graphe dans une surface,
Ph.D. Thesis, Grenoble, France, 1977.

\bibitem{Z1} X. Zha, 
The closed 2-cell embeddings of 2-connected doubly toroidal graphs,
Discrete Math.\ 145 (1995) 259--271.

\bibitem{Zh3} X. Zha, On minimum-genus embeddings,
Discrete Math.\ 149 (1996) 261--278.

\bibitem{Z2} X. Zha,
Closed 2-cell embeddings of 5-crosscap embeddable graphs,
Europ.\ J.\ Comb.\ 18 (1997) 461--477.

\end{thebibliography}
\end{document}